\newtheorem{theorem}{Theorem}
\newtheorem*{thm}{Theorem}
\newtheorem*{lemma}{Lemma}
\newtheorem*{proposition}{Proposition}
\begin{document}

\title[]{On Combinatorial Properties of \\Greedy Wasserstein Minimization}

\author[]{Stefan Steinerberger}
\address{Department of Mathematics, University of Washington, Seattle, WA 98195, USA} \email{steinerb@uw.edu}

\keywords{Wasserstein distance, Greedy Sequences, Irregularities of Distribution, Kronecker, van der Corput, Kritzinger, $L^2-$discrepancy}
\subjclass[2010]{} 
\thanks{S.S. is supported by the NSF (DMS-2123224) and the Alfred P. Sloan Foundation.}

\begin{abstract} We discuss a phenomenon where Optimal Transport leads to a \textit{remarkable} amount of combinatorial regularity. Consider infinite sequences $(x_k)_{k=1}^{\infty}$ in $[0,1]$ constructed in a greedy manner: given $x_1, \dots, x_n$, the new point $x_{n+1}$
is chosen so as to minimize the Wasserstein distance $W_2$ between the empirical measure of the $n+1$ points and the Lebesgue
measure,
$$x_{n+1} = \arg\min_x ~W_2\left( \frac{1}{n+1} \sum_{k=1}^{n} \delta_{x_k} + \frac{\delta_{x}}{n+1},  dx\right).$$
This leads to fascinating sequences (for example: $x_{n+1} = (2k+1)/(2n+2)$ for some $k \in \mathbb{Z}$) which coincide with sequences recently introduced by Ralph Kritzinger in a different setting. Numerically, the regularity of these sequences rival the best known
constructions from Combinatorics or Number Theory. We prove a regularity result below the square root barrier.
\end{abstract}
\maketitle

\section{Introduction and Main Results}
\subsection{Motivation.}   
Given any finite set of points $x_1, \dots, x_n \in [0,1]$, we propose adding a new point in such a way that the Wasserstein $W_2-$distance between the new empirical measure and the Lebesgue measure on $[0,1]$ is minimized. Formally, we consider a notion of energy $E:[0,1] \rightarrow \mathbb{R}_{\geq 0}$ via
$$ E(x) =W_2\left( \frac{1}{n+1} \sum_{k=1}^{n} \delta_{x_k} + \frac{\delta_{x}}{n+1},  dx\right)$$
and then choose $x_{n+1}$ to be any number in which $E$ assumes a global minimum.
Throughout this paper, we will only consider the $W_2$ distance between a discrete measure and the uniform measure on $[0,1]$ for which
the following convenient formula exists: assuming $0 \leq a_1 \leq a_2 \leq \dots \leq a_n \leq 1$, then
$$ W_2\left(\frac{1}{n} \sum_{k=1}^{n} \delta_{a_k}, dx \right)^2 =   \sum_{i=1}^{n} \int_{(i-1)/n}^{i/n} \left( x - a_i\right)^2 dx.$$

This procedure is perhaps best illustrated with an example. 
Consider the initial set of points $\left\{1/\pi, 1/e, 1/\sqrt{2}\right\}$ on $[0,1]$. A computation shows that the point $x_4$ that one needs to add to minimize the $W_2-$transport cost is $x_4 = 7/8$. Repeating the computation after having added $x_4$, the optimal point in the next step is $x_5 = 1/10$. Continuing the computation, one arrives at the sequence
$$ \frac{1}{\pi}, \frac{1}{e}, \frac{1}{\sqrt{2}}, \frac{7}{8}, \frac{1}{10}, \frac{7}{12}, \frac{7}{14}, \frac{13}{16}, \frac{3}{18}, \dots$$
which has a number of remarkable properties. The elements are rational (with numerator odd and denominator being $2n$, see \cite{kritzinger} or \S 2.1). They are uniformly distributed in
the unit interval in a highly regular manner that is illustrated in Fig. 1: newly added points avoid existing points and fill in existing
gaps -- considering the construction, this is not too surprising. However, they seem to do so at the optimal rate which is very difficult to do (see \S 1.3).

\begin{center}
\begin{figure}[h!]
\begin{tikzpicture}
\node at (0,0) {\includegraphics[width=0.45\textwidth]{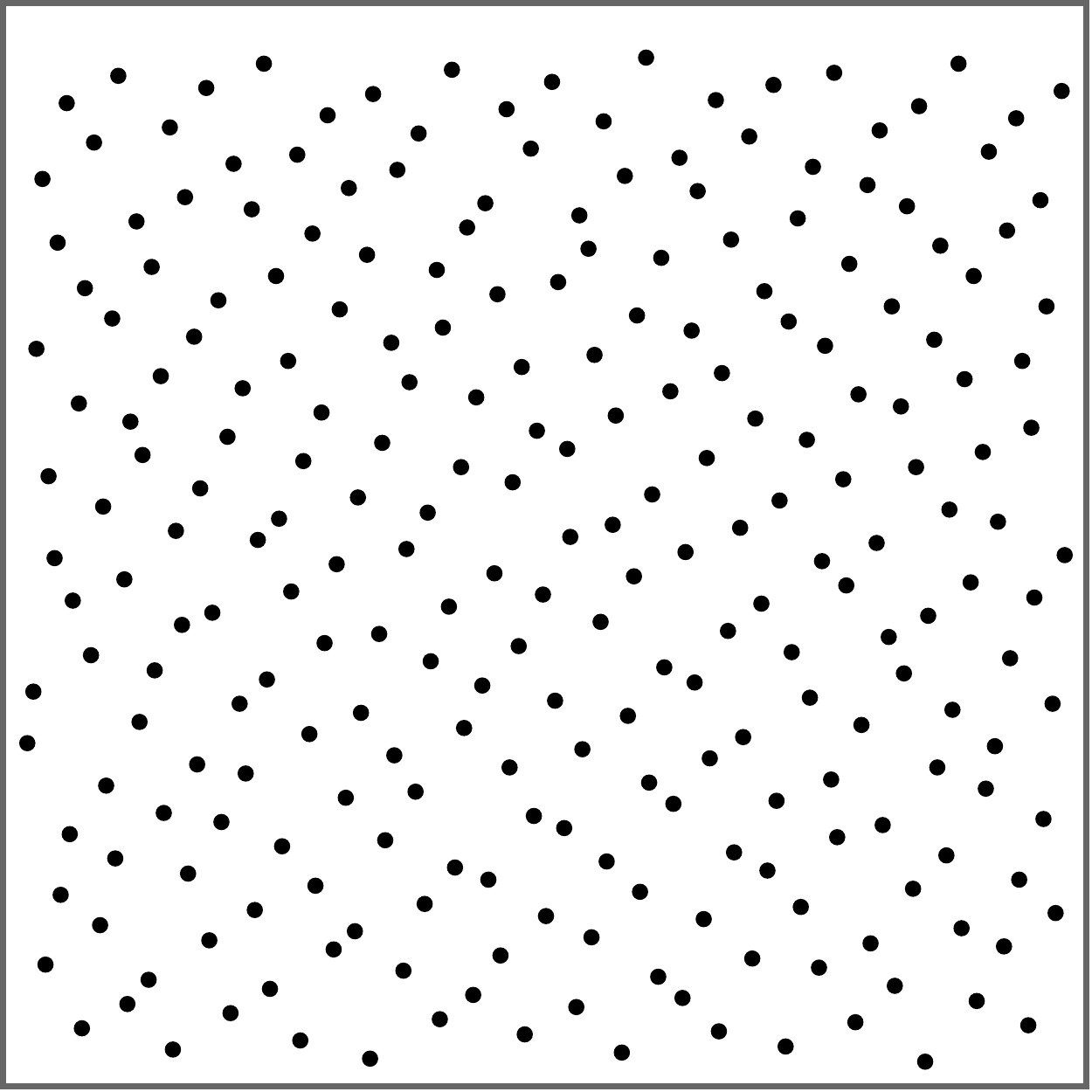}};
\node at (6.2,0) {\includegraphics[width=0.45\textwidth]{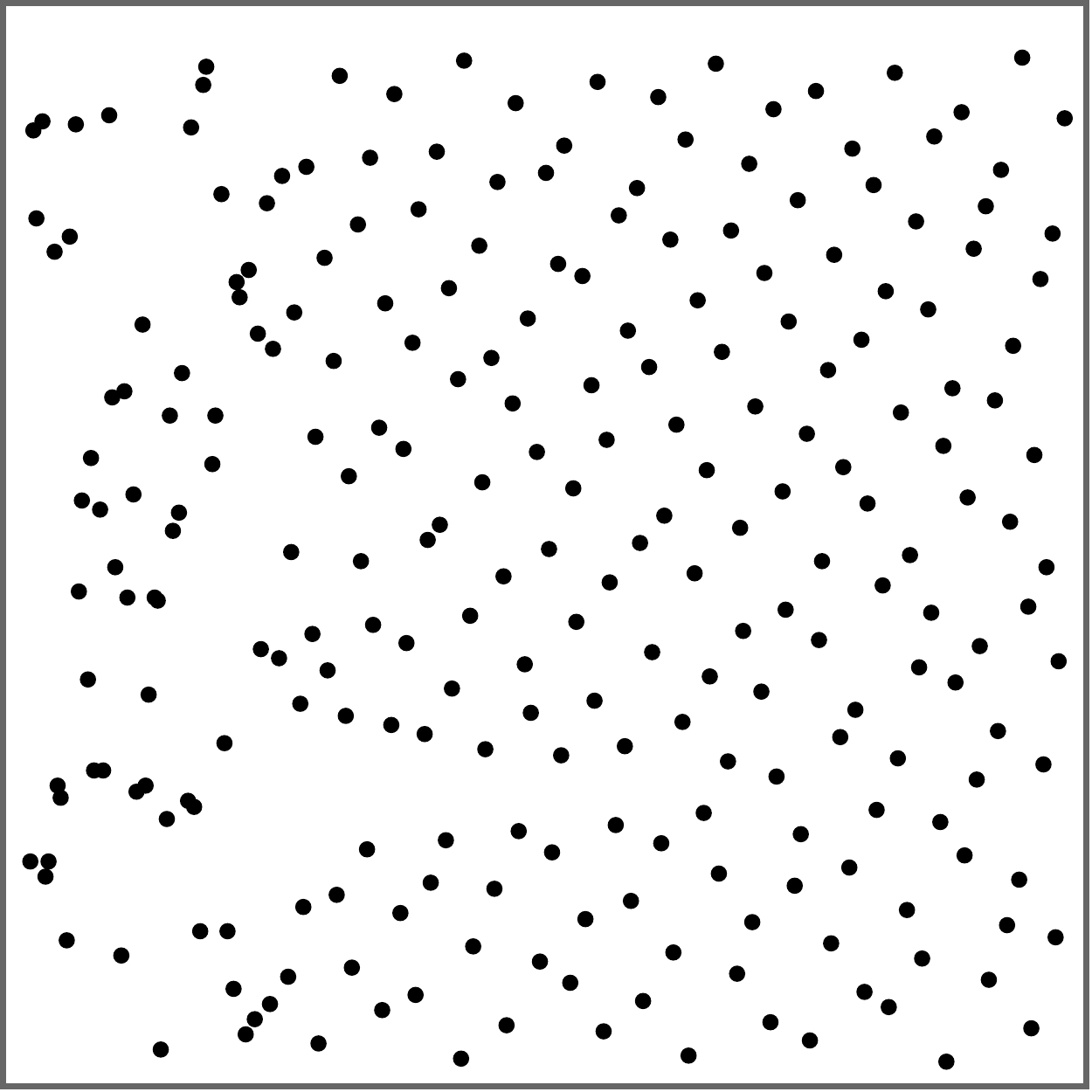}};
\end{tikzpicture}
\caption{An illustration of the first $n$ elements of a sequence: the points $(k/n, ~x_k)$ for $1 \leq k \leq n$ (here $n=250$). Left: starting with $1/\pi, 1/e, 1/\sqrt{2}$. Right: starting with 50 random elements.}
\end{figure}
\end{center}
\vspace{-0pt}
We consider another example (see Fig. 1, right) where we start with 50 randomly sampled points in $[0,1]$. The greedy construction is eager to avoid existing clusters while filling in existing gaps -- what is totally remarkable is that after the greedy construction has added another 50 elements, it seems
to have completely repaired the existing defects and continues in a most regular fashion. 

\subsection{Kritzinger sequences.}
This coincides with another type of construction that was recently given by Ralph Kritzinger \cite{kritzinger} using a seemingly different metric.

\begin{theorem}
Greedy $W_2$ minimization produces a Kritzinger sequence.
\end{theorem}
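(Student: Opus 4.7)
The plan is to reduce the theorem to the identity that the $W_2^2$ functional defining the greedy construction coincides, as a function of the sorted point set, with the $L^2$-discrepancy functional that defines Kritzinger's construction. Once this functional equality is established, the two greedy procedures are identical step by step, and the theorem follows from a direct comparison of definitions.

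Concretely, I would first expand the given formula
$$W_2\Bigl(\tfrac{1}{n+1}\sum_{k=1}^{n+1}\delta_{a_k}, dx\Bigr)^2 = \sum_{i=1}^{n+1}\int_{(i-1)/(n+1)}^{i/(n+1)}(t - a_i)^2\, dt$$
for sorted $a_1 \leq \dots \leq a_{n+1}$ by carrying out the integration term-by-term and collecting powers of $a_i$. After telescoping the purely numerical sums (which contribute the constant $1/3$), the result is
$$W_2^2 = \frac{1}{n+1}\sum_{i=1}^{n+1} a_i^2 \;-\; \frac{1}{(n+1)^2}\sum_{i=1}^{n+1}(2i-1)\, a_i \;+\; \frac{1}{3}.$$
I would then apply the analogous expansion to the classical $L^2$-discrepancy
$$D_2(a_1,\dots,a_{n+1})^2 := \int_0^1\Bigl(\tfrac{1}{n+1}\#\{k : a_k \leq t\} - t\Bigr)^2 dt,$$
split according to the intervals $(a_i, a_{i+1}]$ on which the empirical CDF is constant; the parallel telescoping yields the same explicit formula. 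Equivalently, one can invoke the general identity $\int_0^1 (F_\mu^{-1}(u)-u)^2 du = \int_0^1(F_\mu(t)-t)^2 dt$ valid for probability measures $\mu$ on $[0,1]$, which follows from a Fubini-type argument comparing the hypograph of $F_\mu$ with that of $F_\mu^{-1}$ across the diagonal.

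Since a Kritzinger sequence is, by definition (cf.\ \cite{kritzinger}), greedy with respect to $D_2$, and we have just shown $W_2^2 = D_2^2$ as functions of the combined point set $\{a_1,\dots,a_{n+1}\} = \{x_1,\dots,x_n,x_{n+1}\}$, the two greedy rules admit the same set of minimizers $x_{n+1}$ at every step. The main obstacle is the bookkeeping in the functional identity $W_2^2 = D_2^2$: the conceptual picture (quantile versus distribution-function representation of the same Cramér--von Mises-type quantity) is clean, but the index shifts and constant tracking must be executed carefully to land on exactly the same closed form. Everything else is definition-chasing.
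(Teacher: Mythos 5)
Your proposal is correct, and it matches the \emph{second} proof sketched in the paper (the remark following the main proof). There the author invokes Kritzinger's closed form $\int_0^1 |\#\{x_k\leq x\}-nx|^2\,dx = n\sum_k(x_k-\tfrac{2k-1}{2n})^2+\tfrac{1}{12}$ alongside $n^2 W_2^2 = \tfrac{n^2}{3}+n\sum x_k^2-\sum(2k-1)x_k$, concludes that the two functionals differ by a constant $c_n$ depending only on $n$, and hence share the same minimizers.

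Your proposal goes slightly further than the paper by asserting that the constant is in fact zero, i.e.\ $W_2^2$ \emph{equals} the normalized $L^2$-discrepancy pointwise, and supports this by the quantile/CDF identity $\int_0^1(F_\mu^{-1}(u)-u)^2\,du = \int_0^1(F_\mu(t)-t)^2\,dt$. That identity does hold for probability measures on $[0,1]$ (expand and use $\int_0^1 F^2\,dx = 1-2\int_0^1 t\,F^{-1}(t)\,dt$ and its mirror image obtained by integrating by parts), and your computation of the constant $\tfrac{1}{4(n+1)^3}\sum_{k=1}^{n+1}(2k-1)^2 + \tfrac{1}{12(n+1)^2}=\tfrac13$ confirms $c_n=0$. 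Since equality up to an additive constant suffices for the minimization comparison, the extra precision is cosmetic.

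For contrast: the paper's \emph{primary} proof proceeds differently. It does not compare the two functionals globally; rather it fixes $x_1,\dots,x_n$, introduces the reordered points $y_1\leq\dots\leq y_{n+1}$, isolates the terms of $(n+1)^2 W_2^2$ that depend on the new point $x$, and after tracking the index shift $y_i=x_{i-1}$ for $i\geq j+2$ lands directly on Kritzinger's minimization target $F(x)=(n+1)x^2-x-2\sum_i\max\{x,x_i\}$. That version pays in casework and index bookkeeping what your route pays in establishing the Vallender-type identity; both are legitimate. One small hazard in your route that the paper's primary proof avoids: you must be careful that the ``Kritzinger sequence'' is defined as greedy for the $L^2$-discrepancy (or equivalently for $F$), so that the reduction really is definition-chasing once the functional identity is in hand -- which it is.
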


Kritzinger was motivated by a different notion of regularity which is a classical object in the irregularities of distribution: the $L^2-$discrepancy. Formally, Kritzinger proposes, given $x_1, \dots, x_n$ to pick $x_{n+1}$ so that it minimizes
$$ F(x) = -2 \sum_{k=1}^{n} \max\left\{x_k, x\right\} + (n+1)^2 x^2 - x.$$
This expression is at first glance perhaps a bit difficult to interpret while the Wasserstein $W_2$ interpretation allows access
to the theory of Optimal Transport. Theorem 1 is only true on the unit interval (due to the rigidity of Optimal Transport in one dimension) and fails on domains in $d\geq 2$ dimensions where $W_2$ minimization and Kritzinger sequences are different. We first collect some of the results from \cite{kritzinger}.

\begin{thm}[Kritzinger \cite{kritzinger}]
If $x_n$ is constructed using the greedy method, then it is different from $x_1, \dots, x_{n-1}$ and $x_n = (2k+1)/(2n)$ for some $k \in \mathbb{N}_{\geq 0}$. Moreover, any Kritzinger sequence satisfies, for all $n \in \mathbb{N}$,
$$ \int_{0}^{1} \left| \# \left\{1 \leq k \leq n: x_k \leq x\right\} - n x\right|^2 dx \leq \frac{n}{3} + c,$$
where the constant $c$ only depends on the initial set of points.
\end{thm}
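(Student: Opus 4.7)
The plan is to handle the two assertions separately. Let $y_1 \leq \cdots \leq y_{n-1}$ be the sorted list of the previous $n-1$ points and set $y_0 = 0$, $y_n = 1$.

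For the first claim, view $W_2^2$ as a function of the candidate point $x \in [0,1]$. On each piece $x \in [y_k, y_{k+1}]$ the sorted order is fixed with $x$ occupying slot $k+1$, so all but one of the integrals in the $W_2^2$-formula are constants and the function reduces to $C_k + \int_{k/n}^{(k+1)/n}(x - t)^2 dt$. On this piece this is a convex quadratic in $x$ with vertex at the midpoint $(2k+1)/(2n)$. I would then compute the one-sided derivatives at each breakpoint $x = y_j$ and show that the right derivative equals the left derivative minus $2/n^2$, so every breakpoint is a concave kink and cannot host a local minimum. Since $W_2^2$ is continuous on $[0,1]$ it attains its global minimum, which must therefore sit in the open interior of some piece $(y_k, y_{k+1})$, at $x = (2k+1)/(2n)$. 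This midpoint lies strictly between two consecutive existing points and is thus distinct from every $x_i$.

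For the second claim, set $D_n(x) = \#\{k \leq n : x_k \leq x\} - nx$ and $\|D_n\|_2^2 = \int_0^1 D_n(x)^2\, dx$, so the asserted bound reads $\|D_n\|_2^2 \leq n/3 + c$. The identity $D_{n+1}(x) = D_n(x) + \mathbf{1}_{x_{n+1} \leq x} - x$ gives
\begin{equation*}
\|D_{n+1}\|_2^2 = \|D_n\|_2^2 + G(x_{n+1}),
\end{equation*}
with
\begin{equation*}
G(x) = 2 \int_0^1 D_n(y)\bigl( \mathbf{1}_{x \leq y} - y \bigr) dy + \int_0^1 \bigl( \mathbf{1}_{x \leq y} - y \bigr)^2 dy.
\end{equation*}
Since choosing $x_{n+1}$ to minimize $W_2^2$ is, up to constants, the same as minimizing $G$, the greedy point satisfies $G(x_{n+1}) \leq \int_0^1 G(x)\, dx$. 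A short Fubini calculation shows that the first term averages to $0$ in $x$ (because $\int_0^1 (\mathbf{1}_{x \leq y} - y)\, dx = 0$ for every $y$), while the second averages to $1/6$. Hence $\|D_{n+1}\|_2^2 - \|D_n\|_2^2 \leq 1/6$, and iterating from the initial configuration gives $\|D_n\|_2^2 \leq n/6 + c$, which is strictly stronger than the claimed $n/3 + c$.

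The main obstacle is the derivative-jump computation at each $y_j$ in Part 1: one must verify that when $x$ crosses $y_j$ the integrals reindex so that the $x$-dependent term shifts from $\int_{(j-1)/n}^{j/n}(\,\cdot\, - t)^2\, dt$ to $\int_{j/n}^{(j+1)/n}(\,\cdot\, - t)^2\, dt$, producing exactly a $-2/n^2$ decrease in slope at the crossing. Part 2 is routine once the greedy-vs-average inequality is set up; the only subtlety is to confirm that the averaging applies to the precise objective the greedy rule minimizes, which it does because only $G(x_{n+1})$ depends on $x_{n+1}$.
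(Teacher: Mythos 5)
Your proof is correct; both parts yield the stated conclusions. For the first claim, your direct analysis of $W_2^2$ as a piecewise quadratic in the candidate point, with a downward jump of $2/n^2$ in the one-sided derivative at each existing point, is essentially the same argument the paper uses. The paper differentiates Kritzinger's functional $F(x) = (n+1)x^2 - x - 2\sum_{k}\max\{x,x_k\}$ (after first establishing its equivalence to $W_2^2$ up to additive constants independent of $x$) and notes that $F'$ is piecewise linear with a downward jump of $2$ at each $x_k$, ruling out a minimum at any breakpoint; you perform the equivalent concavity check directly on the $W_2^2$ formula, which amounts to the same thing after rescaling.

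For the second claim your route is genuinely different from the paper's and in fact strictly sharper. The paper isolates the update
$$\int_0^1 D_{n+1}^2 - \int_0^1 D_n^2 = E(x_{n+1}) + \frac{x_{n+1}^3 + (1-x_{n+1})^3}{3},$$
proves a separate ``Basic Lemma'' via two integrations by parts and the observation that a continuous function must somewhere equal or exceed its mean (hence $E$ attains a nonpositive value somewhere), and then bounds the remaining cubic term by its pointwise supremum $1/3$. You instead average the \emph{entire} increment $G(x)$ over $x\in[0,1]$: the cross term vanishes because $\int_0^1(\mathbf{1}_{x\le y}-y)\,dx = 0$ for every $y$, the quadratic term averages to $1/6$, and the greedy minimizer can be no larger than the mean. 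This yields $\int_0^1 D_n^2 \le n/6 + c$, which implies the stated $n/3+c$ and improves on it by a factor of two. The slack in the paper's argument is exactly the gap between $\max_{z}\frac{z^3+(1-z)^3}{3}=\frac13$ and the average value $\frac16$ of that same function, which your bookkeeping recovers by not sacrificing the $E$-part and $\phi$-part bounds separately.
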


 We give a new proof of this result in our framework before
going on to further refining the estimate. The estimate essentially says that for \textit{most} points $0 < x < 1$
$$ \# \left\{1 \leq k \leq n: 0\leq x_k \leq x\right\} = n \cdot x \pm \mathcal{O}(\sqrt{n}).$$
This is the typical square root law that one would also expect from random variables: it is a common theme in the construction
of greedy sequences that one can usually establish a $\mathcal{O}(\sqrt{n})$ error bound using an averaging trick. Simultaneously,
it seems difficult to go past it in most cases.

\subsection{How regular are these sequences really?}
Suppose now that $(x_k)_{k=1}^{\infty}$ is an arbitrary sequence in $[0,1]$. In 1935 van der Corput \cite{vdc, vdc2} asked whether there exists an infinite sequence so that for some universal constant $C$ and all $n \in \mathbb{N}$
$$ \max_{0 \leq x \leq 1} \left| \# \left\{1 \leq k \leq n: x_k \leq x\right\} -  n x \right| \leq C.$$
This was shown to be impossible by van Aardenne-Ehrenfest \cite{aard} and started the theory of \textit{irregularities of distribution}. 
After seminal contributions by K. F. Roth \cite{roth}, it was then proven by W. M. Schmidt \cite{sch} in 1972 that for \textit{every} sequence
there are infinitely many integers $n \in \mathbb{N}$ such that
$$ \max_{0 \leq x \leq 1} \left| \# \left\{1 \leq k \leq n: x_k \leq x\right\} -  n x \right| \geq c \log{n}.$$
The optimal constant is now known to satisfy $0.065 \leq c \leq 0.2223$ where the lower bound is due to Larcher \& Puchhammer \cite{larch}
and the upper bound is due to Ostromoukhov \cite{ost}. 
There are two classical types of sequences with very good behavior: the Kronecker sequences, also known as irrational rotations on the torus, $x_n = n \alpha ~\mod 1$. The other classical example is the van der Corput sequence in base 2 defined via digit expansions in base 2 (see  \cite{chazelle, dick, drmota, kuipers} for details).
\begin{center}
\begin{figure}[h!]
\begin{tikzpicture}
\node at (0,0) {\includegraphics[width=0.5\textwidth]{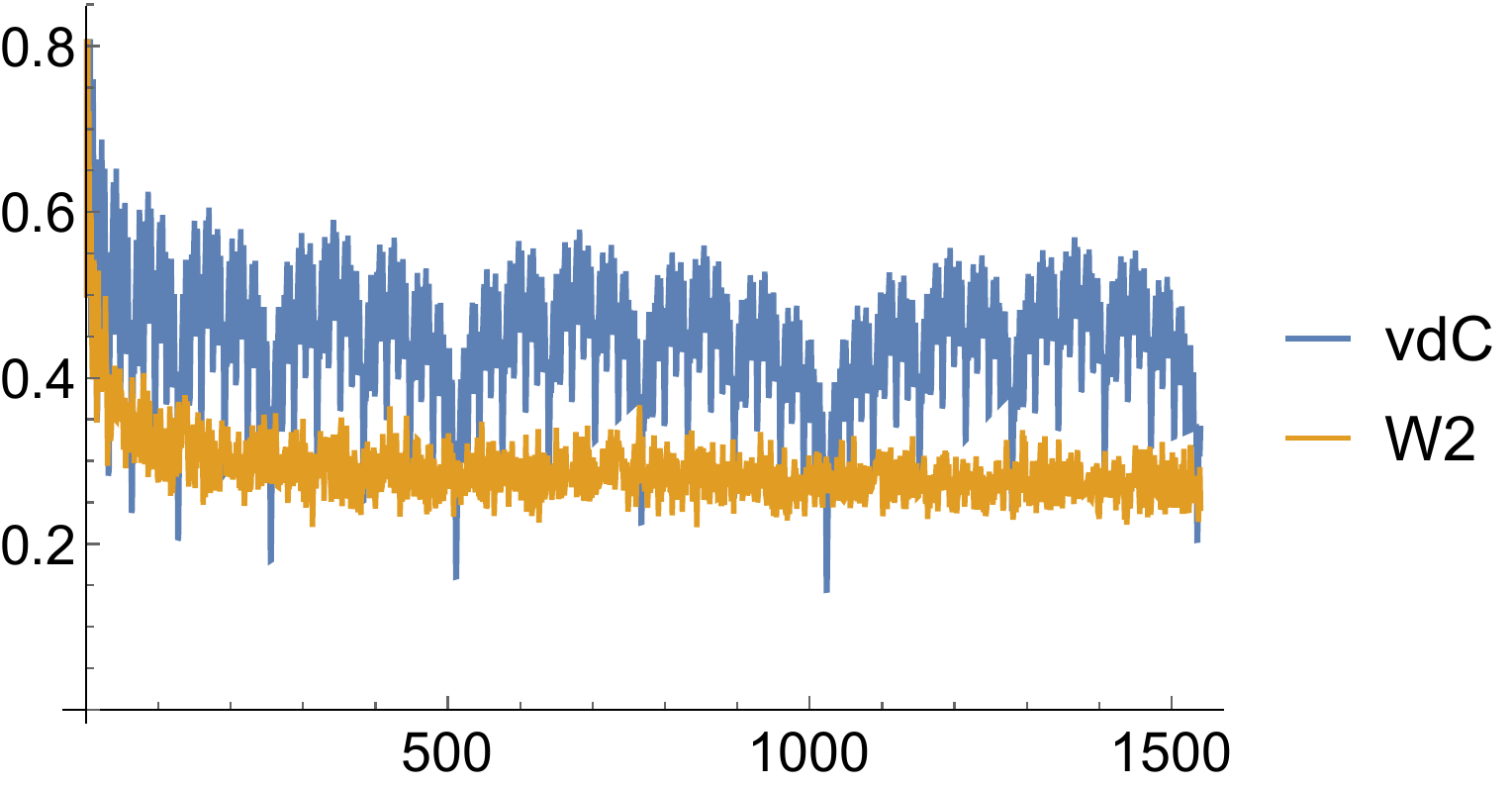}};
\node at (6.5,0) {\includegraphics[width=0.5\textwidth]{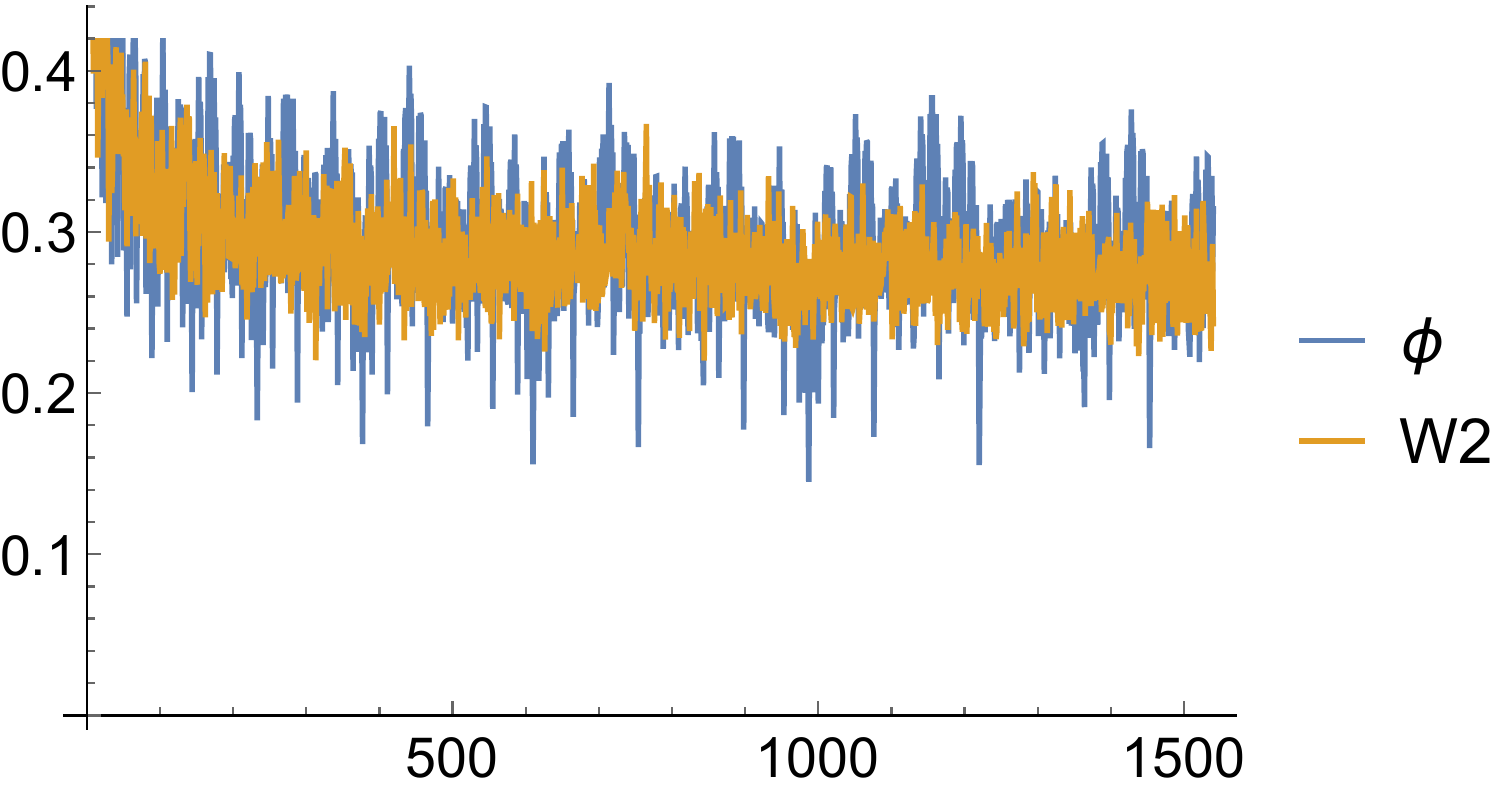}};
\end{tikzpicture}
\caption{Comparison to van der Corput (left) and $n \phi \mod 1$ (right).}
\end{figure}
\end{center}

We compared the maximum deviation (multiplied with $1/\log{n}$ to rescale the leading asymptotic order) between the Kritzinger sequence started with $x_1 = 1/2$, the van
der Corput sequence in base 2 and the Kronecker sequence with $\alpha = (1+\sqrt{5})/2$. The Kritzinger sequence is somewhat superior to the van der Corput sequence and very comparable to the golden ratio Kronecker sequence (albeit it seems to have less variance).
We emphasize that these two sequences are among the most regular sequences known and their behavior
is within a small factor from the theoretical limit: indeed, irrational rotations on the torus are among the most well studied object in many different
fields of mathematics while the van der Corput sequence was specifically designed to be as regular as possible. The Kritzinger
sequence appears to draws its regularity from the geometry of the Wasserstein distance.

\subsection{Improved Regularity}
Our main result is a new regularity statement for Kritzinger sequences. Such a sequence is assumed to be initialized with an arbitrary finite set of real numbers and then extended in the greedy fashion.

\begin{theorem} Every Kritzinger sequence $(x_k)_{k=1}^{\infty}$ has infinitely
many  $n \in \mathbb{N}$ with
$$ \max_{0 \leq x \leq 1} \left| \int_0^x  \# \left\{1 \leq k \leq n: x_k \leq y\right\} -  n y  ~dy \right| \leq 2 \cdot n^{1/3}.$$
\end{theorem}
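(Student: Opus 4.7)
My plan is a contradiction argument resting on the identity $W_2(\mu_n,dx)^2 = E_n/n^2$, where $E_n := \int_0^1 D_n(y)^2\,dy$ and $D_n(y) = \#\{k \le n : x_k \le y\} - ny$. This identity follows by combining the explicit formula from the introduction (written in terms of the quantile function of $\mu_n$) with the fact that on $[0,1]$ the $L^2$-distance between the empirical CDF $F_{\mu_n}$ and the identity equals the $L^2$-distance between the corresponding quantile functions --- a consequence of $\int g^2\,dg = 0$ for $g(x) = F_{\mu_n}(x) - x$ since $g(0) = g(1) = 0$. By Kritzinger's Theorem we have $E_n \le n/3 + O(1)$.

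Writing $G_n(x) = \int_0^x D_n$ and $\bar G_n = \int_0^1 G_n$, a direct computation (using $D_{n+1}(y) - D_n(y) = \mathbf{1}_{y \ge z} - y$ at $z = x_{n+1}$, with an integration by parts to handle $\int_0^1 y\,D_n\,dy = G_n(1) - \bar G_n$) yields
\[
 \Delta_n(z) \;:=\; E_{n+1}(z) - E_n \;=\; -2 G_n(z) + 2\bar G_n + J(z), \qquad J(z) := z^2 - z + \tfrac{1}{3}.
\]
The greedy choice minimizes $\Delta_n(z)$, so for any competitor $z^c$ one has $\Delta_n^\ast \le \Delta_n(z^c)$. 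Taking $z^c$ to be an $\arg\max$ of $G_n$ and writing $M_n = \max G_n$, $\alpha_n = M_n - \bar G_n \ge 0$, I obtain $\Delta_n^\ast \le -2\alpha_n + 1/3$. Telescoping from $N_0$ to $N-1$ and using $E_N \ge 0$ yields $\sum_{n=N_0}^{N-1} \alpha_n \le N/6 + O(1)$; in particular $\alpha_n \ge n^{1/3}$ can hold only on a sparse set of $n$, since otherwise the left-hand side would grow like $N^{4/3}$.

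Assume toward contradiction that $\|G_n\|_\infty > 2 n^{1/3}$ for all $n \ge N_0$. Since $G_n(0) = 0$, this forces $M_n > 2n^{1/3}$ or $-m_n > 2n^{1/3}$ with $m_n = \min G_n$. If additionally $\bar G_n \le n^{1/3}$ in the first sub-case, then $\alpha_n > n^{1/3}$, contradicting the sum estimate once enough $n$ are handled. In the dual sub-case $\bar G_n \le -n^{1/3}$ the competitor $z = 0$ (where $G_n(0) = 0$) gives $\Delta_n^\ast \le 2\bar G_n + 1/3 < -2n^{1/3} + 1/3$, which telescopes to a contradiction with $E_N \ge 0$. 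The \textbf{main obstacle} is the residual configuration $-m_n > 2n^{1/3}$, $M_n \le 2n^{1/3}$, $|\bar G_n| \le n^{1/3}$ (a deep isolated valley with moderate average), where no simple competitor directly exploits the valley. My plan for this case is to combine the Cauchy--Schwarz regularity $|G_n(y) - G_n(y')| \le \sqrt{|y-y'|\,E_n}$ --- which forces any valley of depth $n^{1/3}$ to have width $\gtrsim n^{-1/3}$ --- with either a family-averaged competitor concentrated near the valley, or the observation that $\bar G_n$ changes by only $O(1)$ per greedy step and therefore cannot remain near zero while such a valley persists over many consecutive $n$; this should still yield $\Delta_n^\ast \lesssim -n^{1/3}$ on a positive-density set of $n$ and close the contradiction.
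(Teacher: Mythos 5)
Your overall architecture is the same as the paper's: telescope the $L^2$ energy $E_n = \int_0^1 D_n^2$, use Kritzinger's bound $E_n \le n/3 + c$, compare the greedy increment $\Delta_n^\ast$ against a competitor, and show that the assumption $\|G_n\|_\infty \ge 2n^{1/3}$ forces a contradiction. Your formula $\Delta_n(z) = -2G_n(z) + 2\overline{G}_n + (z^2-z+\tfrac13)$ is correct and equivalent to the paper's representation, and choosing the competitor $z^c = \arg\max G_n$ so that $\Delta_n^\ast \le -2\alpha_n + 1/3$ with $\alpha_n = \max G_n - \overline{G}_n$ is exactly what the paper does.

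The gap is that you have not closed the case you yourself identify as the main obstacle, and your two proposed fixes would not do it. A competitor concentrated near the valley goes the wrong direction: $\Delta_n(z) = -2G_n(z)+2\overline{G}_n + J(z)$ is \emph{minimized} where $G_n$ is large, so placing a competitor where $G_n$ is very negative makes $\Delta_n$ large positive. And the observation that $\overline{G}_n$ drifts by $O(1)$ per step does not by itself force a deep valley to dissipate or $\overline{G}_n$ to leave a window of width $n^{1/3}$. You should also be aware that you have left one sub-case uncovered (namely $M_n > 2n^{1/3}$ together with $\overline{G}_n > n^{1/3}$, in which $\alpha_n$ can be small), and that your target $\Delta_n^\ast \lesssim -n^{1/3}$ is much stronger than what is needed or achievable --- a uniform decrease $\Delta_n^\ast \le -2$ on all $n$ beyond a threshold already telescopes to a contradiction with $E_n \le n/3 + c$.

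What is missing is precisely the paper's key inequality (its ``Main Lemma,'' resting on a ``Fact'' that a deep valley of $G_n$ forces a correspondingly high peak above its average). You already have the ingredient --- Cauchy--Schwarz showing that a valley of depth $M = \overline{G}_n - \min G_n$ has width $|J| \ge M^2/(4 E_n)$ --- but the step you lack is how to convert width-plus-depth into a lower bound on $\alpha_n$. The point is: on the interval $J$ one has $G_n \le \overline{G}_n - M/2$, so $\int_J G_n \le (\overline{G}_n - M/2)|J|$; since the average over all of $[0,1]$ must equal $\overline{G}_n$, the complement $[0,1]\setminus J$ must compensate, which forces $\max G_n - \overline{G}_n \ge |J|\,M/2 \ge M^3/(8E_n)$. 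Combining this with the tautological $\alpha_n = \max G_n - \overline{G}_n$ and $G_n(0)=0$ (so $\max|G_n| \asymp \max G_n - \min G_n$) gives $\alpha_n \ge \tfrac{1}{16 E_n}\,(\max|G_n|)^3 \ge \tfrac{1}{16(n/3+c)}\cdot 8n \approx \tfrac{3}{2}$, which yields the needed constant decrease in \emph{all} cases simultaneously, without the case split at all. Until you supply this step (or an equivalent), the proof is incomplete.
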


In light of Fig. 2, we naturally expect that $n^{1/3}$ can be replaced by $\mathcal{O}(\log{n})$. The scaling of the $L^1-$discrepancy
suggests that $\mathcal{O}(\sqrt{\log{n}})$ may be a plausible guess. It is not clear to us whether it could be as small as $\mathcal{O}(1)$.
Theorem 2 breaks the square root barrier, in particular, it shows in a concrete way that Kritzinger sequences
are more regular than i.i.d. random variables (for which this quantity is $\sim n^{1/2}$ w.h.p).

\subsection{A curious inequality.}
A new technical ingredient of the proof is a curious new inequality for real-valued functions.
\begin{lemma}[Main Lemma] Let $g:[0,1] \rightarrow \mathbb{R}$ be continuous and $g \not\equiv 0$. Then 
$$\max_{0 \leq z \leq 1} ~  \int_0^z g(x) x ~dx -  \int_z^1 g(x) (1-x) ~dx \geq  \frac{1}{16} \frac{1}{ \| g\|_{L^2}^2} \left(\max_{0 \leq x \leq 1} \left| \int_0^x g(y) dy\right|\right)^3.$$
\end{lemma}
This inequality is sharp up to at most a factor of 8: take $g$ to be the characteristic function on $[0, \varepsilon]$. Then
$$ \max_{0 \leq z \leq 1} ~  \int_0^z g(x) x ~dx -  \int_z^1 g(x) (1-x) ~dx = \int_0^{\varepsilon} g(x) x dx = \frac{\varepsilon^2}{2}$$
while
$$  \frac{1}{16}\frac{1}{ \| g\|_{L^2}^2} \left(\max_{0 \leq x \leq 1} \left| \int_0^x g(y) dy\right|\right)^3 = \frac{1}{16 \varepsilon} \cdot \varepsilon^3 = \frac{\varepsilon^2}{16}.$$
This inequality plays a role in a part of the proof that is somewhat modular and could relatively easily be exchanged by something else. Any lower bound 
$$\max_{0 \leq z \leq 1} ~  \int_0^z g(x) x ~dx -  \int_z^1 g(x) (1-x) ~dx \geq  \mbox{informative quantity}$$
could be used to derive a regularity statement for Kritzinger sequences.

\subsection{Related Results.} The problem of irregularities of distributions in dimensions $d \geq 2$ has proven to be challenging \cite{beck, bc, bil3, bil4}. Indeed, at this point there is not even a clear consensus on what the optimal scale of regularity should be. Most examples of highly regular sequences seem to follow the spirit of van der Corput sequences or generalized Kronecker sequences (we refer to the textbooks \cite{chazelle, dick, drmota, kuipers}). This naturally suggests investigating the possibility of new constructions. Such constructions were given by the author in \cite{stein, steini} and by Brown and the author in \cite{brown, brown2}. Pausinger \cite{pausinger} showed that a large class of greedy constructions can replicate the van der Corput sequence: this means that, at least for some suitable initial conditions, they can produce sequences at the theoretical limit of regularity. One of the most interesting example of a greedy sequence was recently given by Ralph Kritzinger \cite{kritzinger}: the arising sequence is, empirically, as regular as any of the other greedy sequences but additionally consists of rational numbers which greatly simplifies computation.
Most of these greedy constructions can easily be shown to have regularity $\lesssim n^{-1/2}$, a barrier which has resisted improvement. A notable exception is \cite{steini2} where additional structure in the complex plane could be used to show that a classic greedy constructions for polynomials has optimal rate up to possibly logarithmic factors (see also work of Beck \cite{beck0} resolving a question of 
Erd\H{o}s \cite{erd}). The Wasserstein distance has been introduced to the study of irregularities of distributions in \cite{stein3} and was then further developed in \cite{brown}. We especially emphasize work of C. Graham \cite{graham} who established an irregularities of distributions phenomenon for $d=1$. An explicit inequality relating the $W_2$ distances to the Green Energy \cite{stein4} has lead to the following rather general principle (established by Brown and the author in \cite{brown}): on compact manifolds in dimension $d \geq 3$ normalized to have volume 1, there always exist greedy sequences $(x_k)_{k=1}^{\infty}$ obtained by minimizing the Green energy for which
$ W_2\left(\frac{1}{n} \sum_{k=1}^{n} \delta_{a_k}, dx \right) \lesssim n^{-1/d}$ uniformly in $n$.

\section{Proofs}
 \S 3.1 gives two proofs of Theorem 1. \S 2.2 derives a useful alternative representation, \S 2.3 proves an Lemma which is enough to prove a regularity rate of $\lesssim n^{1/2}$. \S 2.4 proves the Main Lemma which is then used in \S 2.5 to prove Theorem 2.
\subsection{Proof of Theorem 1}
\begin{proof}
Let $0 \leq x_1 \leq x_2 \leq \dots \leq x_n \leq 1$ be given. Observe that
\begin{align*}
 W_2\left( \frac{1}{n} \sum_{k=1}^{n} \delta_{x_k}, dx \right)^2&= \sum_{k=1}^{n} \int_{ \frac{k-1}{n}}^{\frac{k}{n}} (x_k - z)^2 dz \\
 &= \sum_{k=1}^{n}  \int_{ \frac{k-1}{n}}^{\frac{k}{n}} x_k^2 - 2 x_k z + z^2 dz \\
 &= \frac{1}{3} + \frac{1}{n} \sum_{k=1}^n x_k^2 - \sum_{k=1}^{n} x_k \frac{2k-1}{n^2}.
\end{align*}
Rescaling by a factor of $n^2$, we have
$$ n^2 \cdot W_2\left( \frac{1}{n} \sum_{k=1}^{n} \delta_{x_k}, dx \right)^2 =  \frac{n^2}{3} +  n\sum_{k=1}^n x_k^2 - \sum_{k=1}^{n} x_k (2k-1).$$
Let us now consider the original set and let us add an additional point $x$. We introduce the variables such that
$$ \left\{x_1, \dots, x_n, x \right\} = \left\{y_1, \dots, y_{n+1} \right\}$$
and $y_1 \leq y_2 \leq \dots \leq y_{n+1}$. A computation shows that
\begin{align*}
(n+1)^2 \cdot W_2^2\left( \frac{1}{n+1} \sum_{k=1}^{n+1} \delta_{y_k}, dx \right) &=  \frac{(n+1)^2}{3} +  (n+1)\sum_{k=1}^{n+1} y_k^2 - \sum_{k=1}^{n+1} y_k (2k-1)\\
&=n^2 \cdot W_2^2\left( \frac{1}{n} \sum_{k=1}^{n} \delta_{x_k}, dx \right) + \frac{(n+1)^2 - n^2}{3} \\
&+ (n+1) x^2 + \sum_{k=1}^{n} x_k^2 \\
&+\sum_{k=1}^{n} x_k \cdot (2k-1)  - \sum_{k=1}^{n+1} y_k \cdot (2k-1).
\end{align*}
Among these terms only two, $(n+1)x^2$ and the last sum, actually depend on $x$. 
The next step consists in simplifying the difference between the last two sums to exploit some additional cancellation. 
There are now three cases: (1) $x< x_1$, (2) $x> x_n$ or (3) there exists  $j$ such that
$$ x_j \leq x < x_{j+1}.$$
We will only deal with case (3), the other two cases can be dealt with analogously.
Under this assumption, we necessarily have
$$ y_i = \begin{cases} x_i \quad &\mbox{if}~ i \leq j\\
x \qquad &\mbox{if}~i = j+1 \\
x_{i-1} \qquad &\mbox{if}~i \geq j+2 \end{cases}$$
Thus we have
\begin{align*}
 \sum_{k=1}^{n+1} y_k (2k-1) - \sum_{k=1}^{n} x_k (2k -1) &= (2j + 1) x + 2 \sum_{k \geq j+1} x_k\\
 &= x + 2 \sum_{k=1}^{n} \max\left\{x, x_k \right\}.
\end{align*}
Collecting all the terms that actually depend on $x$, our goal is to minimize
$$F(x) = (n+1)x^2 -x - 2 \sum_{i=1}^{n} \max\left\{x, x_i \right\}.$$
which is exactly the Kritzinger functional.
\end{proof}
Kritzinger's functional can be derived from trying to minimize the $L^2-$discrepancy
$$ \int_0^1 \left| \# \left\{1 \leq k \leq n: x_k \leq x\right\} - nx \right|^2 dx.$$
In hindsight, the fact that these functionals are related is perhaps not too surprising: the fact that Wasserstein distances can be equivalently defined over empirical cumulative distribution functions is a feature of us working
in one dimension (see, for example, Vallender \cite{vall}). Another quick proof of Theorem 1 could be given by appealing to the formula (see e.g. \cite[Equation 10]{kritzinger}) that for ordered points
$$ \int_0^1 \left| \# \left\{1 \leq k \leq n: x_k \leq x\right\} - nx \right|^2 dx  = n \sum_{k=1}^{n} \left(x_k - \frac{2k-1}{2n} \right)^2 + \frac{1}{12}$$
while
$$ n^2 \cdot W_2\left( \frac{1}{n} \sum_{k=1}^{n} \delta_{x_k}, dx \right)^2 =  \frac{n^2}{3} +  n\sum_{k=1}^n x_k^2 - \sum_{k=1}^{n} x_k (2k-1).$$
This quickly implies that
$$ \int_0^1 \left| \# \left\{1 \leq k \leq n: x_k \leq x\right\} - nx \right|^2 dx - n^2 \cdot W_2^2\left( \frac{1}{n} \sum_{k=1}^{n} \delta_{x_k}, dx \right) = c_n$$
for a constant $c_n$ that is independent of the set of points. Therefore, minimizing one of the functionals is equivalent to minimizing the other.
The explicit form of $F$ allows to prove that minimizers are rational numbers.
\begin{proposition}[Kritzinger \cite{kritzinger}] Let $0 \leq x_1 \leq x_2 \leq \dots \leq x_n \leq 1$ and let 
$$F(x) = (n+1)x^2 -x - 2 \sum_{k=1}^{n} \max\left\{x, x_k \right\}.$$
Then the minimizer is different from all the existing points and of the form $x = j/(2n)$ for for some odd
integer $j \in 2\mathbb{Z} + 1$.
\end{proposition}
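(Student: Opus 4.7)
The plan is to analyze $F$ as a continuous piecewise-quadratic function on $[0,1]$, locate the critical point on each piece explicitly, and then rule out the breakpoints as possible minimizers. With the convention $x_0 := 0$ and $x_{n+1} := 1$, the unit interval is partitioned into the $n+1$ subintervals $[x_j, x_{j+1}]$ for $0 \leq j \leq n$. On such a subinterval one has $\max\{x, x_k\} = x$ for $k \leq j$ and $\max\{x, x_k\} = x_k$ for $k > j$, so
$$F(x) = (n+1)x^2 - (2j+1)x - 2\sum_{k>j} x_k.$$
This is a strictly convex parabola whose unique vertex sits at
$$x^{*}_j = \frac{2j+1}{2(n+1)},$$
an odd integer divided by $2(n+1)$. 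Collecting these candidates over $j = 0, 1, \ldots, n$ gives $n+1$ potential minimizers, each of the claimed rational form.

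Next I would rule out the existing points $x_1, \ldots, x_n$ and the endpoints $0, 1$. Differentiating within each piece gives $F'(x) = 2(n+1)x - (2j+1)$, and comparing the left and right derivatives at an interior node yields
$$F'(x_j^{+}) - F'(x_j^{-}) = -2.$$
If $x_j$ were a local minimum of the continuous function $F$, one would need $F'(x_j^{-}) \leq 0 \leq F'(x_j^{+})$, which is incompatible with the $-2$ drop. (If several $x_j$ coincide, the slope drops by a larger even integer and the same argument applies.) A direct computation also gives $F'(0^{+}) = -1 < 0$ and $F'(1^{-}) = 2(n+1) - (2n+1) = 1 > 0$, so neither endpoint is a minimizer.

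Combining the two steps, any global minimizer must lie strictly inside some piece $[x_j, x_{j+1}]$, where it must equal the interior vertex $x^{*}_j = (2j+1)/(2(n+1))$; in particular it is automatically distinct from every $x_k$ and carries an odd numerator over an even denominator, as claimed. I do not anticipate a substantial obstacle here. The only mildly delicate step is the node exclusion, which relies on the clean fact that the slope of $F$ always jumps downward by $2$ at each data point, preventing any $x_k$ from supporting a local minimum of a continuous piecewise-convex function.
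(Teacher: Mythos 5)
Your proof is correct and takes essentially the same route as the paper: the paper also writes down $F'$ on the open intervals between data points, identifies the candidate critical points $\frac{1+2\#\{k:x_k<x\}}{2n+2}$, and rules out the nodes $x_k$ by observing that $F'$ jumps downward there, exactly as you do with the piecewise-parabola vertices and the slope drop of $-2$. One small note: your vertex $\frac{2j+1}{2(n+1)}$ agrees with the formula in the paper's own proof (denominator $2n+2$), so the ``$j/(2n)$'' in the displayed Proposition appears to be an indexing slip in the statement rather than an error in your argument; your explicit exclusion of the endpoints $0,1$ and of coincident nodes is a welcome bit of added care.
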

\begin{proof} Note that the function $F$ is continuous, not differentiable in the points $x_1, \dots, x_n$ but differentiable everywhere else. In particular, if $x$ is different from the $n$ points, then
$$ F'(x) = 2 (n+1) x - 1 - 2 \cdot \# \left\{ 1 \leq k \leq n: x_k < x \right\}.$$
The only candidates for roots outside of the existing points are numbers of the form
$$ x = \frac{1 + 2\# \left\{ 1 \leq k \leq n: x_k < x \right\}}{2n+2}$$
which is exactly of the desired form. However, we also note that $F'$ is piecewise linear between the points. If it were now the case that
$F$ assumes its global minimum in $x_k$, then we would have to have $F'(x_k - \varepsilon) \leq 0$ and $F'(x_k + \varepsilon) \geq 0$ for sufficiently
small values of $\varepsilon$ which is ruled out by the explicit form of $F'$.
\end{proof}

\subsection{Some Preparatory Work.} Since we now know that minimizing the $W_2$ cost is equivalent to minimizing the $L^2-$discrepancy, we will instead work with this new, renormalized functional.
To this end we introduce, for any given set of $n$ points in the unit interval, the counting function
$$ f_n(x) =  \# \left\{ 1 \leq k \leq n: x_k \leq x \right\}.$$
We will also use, throughout the rest of the paper, the rescaled version
$$ g_n(x) =  \# \left\{ 1 \leq k \leq n: x_k \leq x \right\} - nx.$$

Assume $x_{n+1}$ is being added.
A straightforward computation shows that
\begin{align*}
\int_0^1 ( f_{n+1}(x) - (n+1)x)^2 dx &= \int_0^{x_{n+1}} ((f_n(x) - nx) -x)^2 dx\\
& + \int_{x_{n+1}}^1 ((f_n(x)-nx) +(1 -x))^2 dx \\
& = \int_0^{1} (f_n(x) - nx)^2 dx - 2 \int_0^{x_{n+1}} (f_n(x) - nx)x ~dx \\
&+ 2 \int_{x_{n+1}}^{1} (f_n(x) - nx)(1-x) dx+ \frac{x_{n+1}^3}{3}  + \frac{(1-x_{n+1})^3}{3}.
\end{align*}
Note that some of these terms do not depend on $x_{n+1}$ while others do. Collecting only those
terms that the depend on $x_{n+1}$ shows that one needs to solve
$$  E(z)  + \frac{z_{}^3 + (1-z)^3}{3}  \rightarrow \min,$$
where the function $E$ is defined as
\begin{align*}
 E(z) = -2 \int_0^{z} (f_n(x) - nx) x ~dx +  2 \int_{z_{}}^{1} (f_n(x) - nx)(1-x) ~dx.
 \end{align*}
For the remainder of the paper, we will work with this representation.
\subsection{An Elementary Lemma} The purpose of this section is to introduce a basic Lemma and to use it to prove the missing part of Kritzinger's Theorem: that
$$  \int_0^{1} (f_n(x) - nx)^2 dx \leq \frac{n}{3} + c,$$
where $c>0$ only depends on the initial configuration. Subsequent arguments will then both use and refine this step.
\begin{lemma}[Basic Form] Let $g:[0,1] \rightarrow \mathbb{R}$ be continuous. Then
$$\max_{0 \leq z \leq 1} ~  \int_0^z g(x) x ~dx -  \int_z^1 g(x) (1-x) ~dx \geq 0.$$
\end{lemma}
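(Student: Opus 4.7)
The plan is to show that the function
\[
h(z) = \int_0^z g(x)\, x\, dx - \int_z^1 g(x)(1-x)\, dx
\]
has average value zero on $[0,1]$, from which the maximum being nonnegative follows immediately. Since $h$ is continuous, either $h\equiv 0$ (in which case the claim is trivial) or $h$ must attain both positive and negative values, so in particular $\max_{0\le z\le 1} h(z) \ge 0$.

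The key computation is $\int_0^1 h(z)\, dz$ via Fubini. For the first term, swapping the order of integration over the region $\{0 \le x \le z \le 1\}$ leaves $z$ ranging in $[x,1]$ for fixed $x$, producing a factor $(1-x)$:
\[
\int_0^1 \int_0^z g(x)\, x\, dx\, dz = \int_0^1 g(x)\, x(1-x)\, dx.
\]
For the second term, swapping the order of integration over the region $\{0 \le z \le x \le 1\}$ leaves $z$ ranging in $[0,x]$ for fixed $x$, producing a factor $x$:
\[
\int_0^1 \int_z^1 g(x)(1-x)\, dx\, dz = \int_0^1 g(x)\, x(1-x)\, dx.
\]
Subtracting these two identical quantities gives $\int_0^1 h(z)\, dz = 0$, which finishes the argument.

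There is no real obstacle here; this is essentially a one-line Fubini computation once one has the right idea to average over $z$ rather than attempting a pointwise analysis. The only subtlety worth flagging is that the lemma does not require any sign hypothesis on $g$, and that this elementary averaging argument will be the prototype for the sharper Main Lemma in \S2.4, where the task becomes to quantify how large $\max_z h(z)$ must be in terms of $\|g\|_{L^2}$ and the maximum of the antiderivative of $g$.
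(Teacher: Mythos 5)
Your proof is correct, and it takes a genuinely different route from the paper. You compute the average of $h(z) = \int_0^z g(x)\,x\,dx - \int_z^1 g(x)(1-x)\,dx$ over $z \in [0,1]$ directly by Fubini and find it vanishes, whence a continuous function with zero average has a nonnegative maximum. The paper instead integrates by parts twice to derive the \emph{pointwise} identity
\[
\int_0^z g(x)\,x\,dx - \int_z^1 g(x)(1-x)\,dx = G(z) - \int_0^1 G(x)\,dx, \qquad G(s)=\int_0^s g(y)\,dy,
\]
and then observes that a function must exceed its average somewhere. Both arguments are averaging arguments, but the paper's version yields more: it identifies $h(z)$ as $G(z)-\overline{G}$, and it is precisely this identity that the Main Lemma in \S2.4 exploits when lower-bounding $\max_z G(z) - \overline{G}$ in terms of $\|g\|_{L^2}$ and $\max_x |G(x)|$. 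Your Fubini computation establishes $\int_0^1 h = 0$ without exposing this structure, so while it is a clean and slightly more economical proof of the Basic Lemma in isolation, you would still need to derive the $G(z)-\overline{G}$ identity (or something equivalent) before attempting the refinement you correctly anticipate in your final paragraph.
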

\begin{proof}
Using integration by parts  
\begin{align*}
 \int_0^z g(x) x ~dx -  \int_z^1 g(x) (1-x) ~dx &=   \int_0^z g(x) x ~dx +  \int_z^1 g(x) (x-1) ~dx \\
 &= \int_0^1 g(x) x dx - \int_z^1 g(x) dx.
 \end{align*}
Introducing the anti-derivative $G: [0,1] \rightarrow \mathbb{R}$ via
$ G(s) = \int_0^s g(x) dx$
we have, integrating again by parts,
\begin{align*}
\int_0^1 g(x)x dx = G(x) x \big|_0^1 - \int_0^1 G(x) dx= G(1) - \int_0^1 G(x) dx
\end{align*}
as well as
$  \int_z^1 g(x) dx = G(1) - G(z).$
Thus
\begin{align*}
 \int_0^z g(x) x ~dx -  \int_z^1 g(x) (1-x) ~dx &=   G(z) - \int_0^1 G(x) dx.
 \end{align*}
Every function has to be at least as large as its average somewhere and
$$  \max_{0 \leq z \leq 1} G(z) - \int_0^1 G(x) dx \geq 0.$$
\end{proof}

One sees that the proof is merely a sequence of identities: the part where one can hope to gain is the
last step which is clearly only sharp whenever $G$ is constant.
The elementary Lemma now implies the missing part of Corollary 1.

\begin{proposition} We have 
$$  \int_0^1 ( f_{n+1}(x) - (n+1)x)^2 dx \leq \int_0^1 ( f_{n}(x) - n x)^2 dx + \frac{1}{3}.$$
\end{proposition}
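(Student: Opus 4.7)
The plan is to combine the identity derived in \S 2.2 with the Basic Form Lemma applied to $g = f_n - n \cdot \mathrm{id}$.

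First, I would recall from \S 2.2 that
$$ \int_0^1 (f_{n+1}(x)-(n+1)x)^2 \,dx = \int_0^1 (f_n(x)-nx)^2\,dx + E(x_{n+1}) + \frac{x_{n+1}^3 + (1-x_{n+1})^3}{3}.$$
Since $x_{n+1}$ is chosen as the global minimizer of the right-hand side over $z \in [0,1]$, I can upper bound this by the value of the right-hand side at any convenient test point $z^\ast \in [0,1]$:
$$ \int_0^1 (f_{n+1}(x)-(n+1)x)^2 \,dx \leq \int_0^1 (f_n(x)-nx)^2\,dx + E(z^\ast) + \frac{(z^\ast)^3 + (1-z^\ast)^3}{3}.$$

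Next I would produce a good choice of $z^\ast$. Applying the Basic Form Lemma to $g(x) = f_n(x) - nx$, there exists $z^\ast \in [0,1]$ such that
$$ \int_0^{z^\ast} g(x) x\,dx - \int_{z^\ast}^1 g(x)(1-x)\,dx \geq 0,$$
which is exactly the statement $E(z^\ast) \leq 0$, since $E(z) = -2\bigl[\int_0^z g(x)x\,dx - \int_z^1 g(x)(1-x)\,dx\bigr]$.

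Finally, I would use the elementary inequality $z^3 + (1-z)^3 \leq 1$ on $[0,1]$ (the function is convex with maximum attained at the endpoints). Plugging both facts into the displayed inequality yields
$$ \int_0^1 (f_{n+1}(x)-(n+1)x)^2\,dx \leq \int_0^1 (f_n(x)-nx)^2\,dx + 0 + \frac{1}{3},$$
which is the claimed bound. There is no real obstacle here: the proposition reduces to combining the minimization identity, the sign information from the Basic Form Lemma, and a trivial calculus bound on $z^3+(1-z)^3$. Iterating this inequality immediately gives the $n/3 + c$ estimate in Kritzinger's Theorem, as advertised.
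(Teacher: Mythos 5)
Your proof is correct and follows exactly the paper's route: use the decomposition from \S 2.2, invoke the Basic Form Lemma to pick a test point where $E$ is nonpositive, bound the greedy minimizer by that test point, and finish with the trivial bound $z^3 + (1-z)^3 \leq 1$. The only cosmetic difference is that you keep the factor $-2$ inside the definition of $E$ while the paper's Proposition momentarily writes the remainder as $-2E(x_{n+1})$, but the substance is identical.
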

\begin{proof} Note that we have
\begin{align*}
 \int_0^1 ( f_{n+1}(x) - (n+1)x)^2 dx &= \int_0^1 ( f_{n}(x) - n x)^2 dx - 2 \cdot E(x_{n+1})\\
 &+  \frac{x_{n+1}^3 + (1-x_{n+1})^3}{3}.
\end{align*}
There exists $x_{n+1}$ such that $E(x_{n+1}) \geq 0$ from which the result follows since
$$ \forall~ 0\leq z \leq 1 \qquad \frac{z^3 + (1-z)^3}{3} \leq \frac{1}{3}$$
\end{proof}

\subsection{A refined Lemma.} The purpose of this section is to establish what is perhaps the core of the improvement: instead of merely using that a function has to exceed its average, as in the proof of the basic Lemma, we quantify the gain from below.
\begin{lemma}[Main Lemma] Let $g:[0,1] \rightarrow \mathbb{R}$ be continuous and $g \not\equiv 0$. Then 
$$\max_{0 \leq z \leq 1} ~  \int_0^z g(x) x ~dx -  \int_z^1 g(x) (1-x) ~dx \geq  \frac{1}{16} \frac{1}{ \| g\|_{L^2}^2} \left(\max_{0 \leq x \leq 1} \left| \int_0^x g(y) dy\right|\right)^3.$$
\end{lemma}
\begin{proof} Using again the antiderivative
$$ G(x) = \int_0^x g(y) dy$$
the computations in the basic Lemma already established that
$$\max_{0 \leq z \leq 1} ~  \int_0^z g(x) x ~dx -  \int_z^1 g(x) (1-x) ~dx = \max_{0 \leq z \leq 1} G(z) - \int_0^1 G(x) dx \geq 0.$$
We will work with this expression instead. In order to emphasize that the integral in this expression is truly a constant,
the average value, we will abbreviate 
$$ \overline{G} = \int_0^1 G(x) dx \in \mathbb{R}$$
We start by arguing that if $G$ assumes a value much smaller than the average, then it also has to assume values that are larger than the average. The difficulty here is that `much smaller than the average' is a pointwise statement which we have to turn into a statement about the integral. This is summarized in the following statement.\\

\textbf{Fact.} \textit{We have}
$$\max_{0 \leq z \leq 1} G(z) - \overline{G} \geq \frac{1}{8 \|g\|_{L^2}^2} \cdot \left( \overline{G} - \min_{0 \leq x \leq 1} G(x)  \right)^3.$$

\begin{proof}[Proof of the Fact.]
For simplicity of exposition,we abbreviate 
$$ M = \overline{G} - \min_{0 \leq x \leq 1} G(x) \geq 0.$$
Let us
assume the minimum is assumed in 
$$ G(z_1) = \min_{0 \leq x \leq 1} G(x).$$
 Let $J$ be the shortest interval containing both $z_1$ and a point $0 \leq z_2 \leq 1$ for which
$$ G(z_2) = G(z_1) + \frac{M}{2}.$$
The existence of such a point $z_2$ follows from the fact that $G$ is continuous and
$$ G(z_1) \leq G(z_1) + \frac{M}{2} \leq G(z_1) + M = \overline{G}$$
and the function $G$ assumes both the value $G(z_1)$ and the value $\overline{G}$ and the intermediate value theorem applies.
If $M = 0$ there is nothing to prove, thus we can assume $M \neq 0$ implying $z_1 \neq z_2$.  The shortest interval $J$ containing both
$z_1$ has positive length and $z_1, z_2$ are its endpoints. 
Using Cauchy-Schwarz on $J$,
\begin{align*}
\frac{M}{2} &= \left| G(z_2) - G(z_1) \right| = \left| \int_{J}^{} g(x) dx \right|\\
& \leq |J|^{1/2} \left( \int_{J} g(x)^2 dx \right)^{1/2} \leq  |J|^{1/2} \left( \int_{0}^1 g(x)^2 dx \right)^{1/2} = |J|^{1/2} \cdot \|g\|_{L^2}.
\end{align*}
This provides an upper bound on $M$.  Our next goal will be to show that $|J|$ cannot be too long: if it is long, then $\max_{0 \leq z \leq 1} G(z) - \overline{G} $ has to be large which is exactly what we are trying to show. This follows at once from
\begin{align*}
\overline{G} &= \int_0^1 G(x) dx = \int_{J} G(x) dx + \int_{[0,1] \setminus J} G(x) dx \\
&\leq \left(\overline{G} - \frac{M}{2}\right) \cdot |J| + (1-|J|) \cdot \left(\overline{G} + \left[\max_{0 \leq z \leq 1} G(z) - \overline{G} \right] \right)\\
& \leq \overline{G} - \frac{M}{2} \cdot |J| + \left[\max_{0 \leq z \leq 1} G(z) - \overline{G} \right]
\end{align*}
from which we infer that
$$ \max_{0 \leq z \leq 1} G(z) - \overline{G}  \geq |J| \cdot \frac{M}{2}.$$
 We can now combine these bounds. We have
$$ \frac{M^2}{4 \|g\|_{L^2}^2} \leq |J| \leq \frac{2}{M} \cdot \left[\max_{0 \leq z \leq 1} G(z) - \overline{G} \right]$$
which implies that
$$ \max_{0 \leq z \leq 1} G(z) - \overline{G}  \geq \frac{M^3}{8 \|g\|_{L^2}^2}.$$
\end{proof}
The fact will now allow us to conclude the proof of the Main Lemma.
Consider the values assumed by $G$. Since $G$ is continuous, we have
$$ \left\{G(x): 0 \leq x \leq 1 \right\} = [A,B],$$
where $A \leq \overline{G} \leq B$. We introduce $Y = B-A$. As a first observation, we have
$$  \max_{0 \leq x \leq 1} \left| \int_0^x g(y) dy\right|  = \max_{0\leq x \leq 1} |G(x)|. $$
Note that $G(0) = 0$ and thus $A \leq 0 \leq B$ from which we deduce that
$$  \frac{Y}{2} \leq \max_{0 \leq x \leq 1} \left| \int_0^x g(y) dy\right|  = \max_{0\leq x \leq 1} |G(x)| \leq Y.$$
We will now show that a large value of $Y$ will imply that $G$ has to exceed its maximum by at least a little bit (depending on $Y$).
For this, we write $Y = Y_1 + Y_2$, where $Y_1 = B- \overline{G}$ and $Y_2 = \overline{G} - A$.
Then, tautologically, 
$$\max_{0 \leq z \leq 1} G(z) - \overline{G} = Y_1.$$
Simultaneously, appealing to the fact proved above, we have
$$\max_{0 \leq z \leq 1} G(z) - \overline{G} \geq \frac{Y_2^3}{8 \| g\|_{L^2}^2}.$$
Altogether, we have
$$\max_{0 \leq z \leq 1} G(z) - \overline{G} \geq \max\left\{ Y_1,  \frac{Y_2^3}{8 \| g\|_{L^2}^2}\right\} = \max\left\{ Y_1,  \frac{(Y-Y_1)^3}{8 \| g\|_{L^2}^2}\right\} $$
which we turn into a smoother object via
$$\max_{0 \leq z \leq 1} G(z) - \overline{G} \geq \frac{1}{2} \left( Y_1 +  \frac{(Y-Y_1)^3}{8 \| g\|_{L^2}^2}\right).$$
Differentiation shows that
$$ \frac{\partial}{\partial Y_1} \left[Y_1 +  \frac{(Y-Y_1)^3}{8 \| g\|_{L^2}^2} \right]= 1 + \frac{3}{8} \frac{(Y-Y_1)^2}{ \| g\|_{L^2}^2} \geq 1$$
which shows that the minimum is assumed for $Y_1 = 0$. Thus
$$ \max_{0 \leq z \leq 1} G(z) - \overline{G} \geq \frac{1}{16} \frac{Y^3}{ \| g\|_{L^2}^2}.$$
Altogether, we obtain, as desired,
$$ \max_{0 \leq z \leq 1} G(z) - \overline{G} \geq \frac{1}{16} \frac{1}{ \| g\|_{L^2}^2} \left(\max_{0 \leq x \leq 1} \left| \int_0^x g(y) dy\right|\right)^3.$$
\end{proof}

\subsection{Proof of Theorem 2}
\begin{proof}
Suppose now that $(x_k)_{k=1}^{\infty}$ is a Kritzinger sequence. 
We note that 
$$  \int_0^{1} (f_n(x) - nx)^2 dx = \int_0^{1} g_n(x)^2 dx \leq \frac{n}{3} + c,$$
 where $c$ only depends on the initial configuration. Suppose now there exists $N \in \mathbb{N}$ such that for all $n \geq N$ we have
 $$ \max_{0 \leq x \leq 1}  \left| \int_0^x g_n(y) dy\right| \geq 2 \cdot n^{1/3}.$$
We will use this to deduce a contradiction. We have
\begin{align*}  \int_0^{1} g_{n+1}(x)^2 dx &\leq  \int_0^{1} g_n(x)^2 dx + \frac{1}{3}  \\
&- \max_{0 \leq z \leq 1} 2\left(   \int_0^{z} g_n(x) x ~dx - \int_{z_{}}^{1} g_n(x) (1-x) ~dx\right).
\end{align*}
The Main Lemma implies
\begin{align*}
  \max_{0 \leq z \leq 1} 2\left(   \int_0^{z} g_n(x) x ~dx - \int_{z_{}}^{1} g_n(x) (1-x) ~dx\right) &\geq \frac{1}{8} \frac{1}{ \| g_n\|_{L^2}^2} \left(\max_{0 \leq x \leq 1} \left| \int_0^x g_n(y) dy\right|\right)^3 \\
  &\geq \frac{1}{8} \frac{1}{\frac{n}{3} + c} 8 n = \frac{n}{n/3 + c}.
  \end{align*}
However, for $n$ sufficiently large (say $n \geq N_1$ where $N_1$ depends only on $c$), this implies, for all $n \geq \max(N, N_1)$
$$  \int_0^{1} g_{n+1}(x)^2 dx  \leq  \int_0^{1} g_{n}(x)^2 dx - 2$$
which leads to a contradiction since these integrals are always positive.
\end{proof}

\textbf{Remark.} We note that this allows for a small refinement: the argument shows that for all $N$ sufficiently large, there always exist $N \leq n \leq 100N$ for which
 $$ \max_{0 \leq x \leq 1}  \left| \int_0^x g_n(y) dy\right| \leq 2 \cdot n^{1/3}.$$

\end{document}